\numberwithin{equation}{section}
\theoremstyle{plain}
\newtheorem{theorem}[equation]{Theorem}
\newtheorem{corollary}[equation]{Corollary}
\newtheorem{lemma}[equation]{Lemma}
\theoremstyle{definition}
\newtheorem{definition}[equation]{Definition}
\theoremstyle{remark}
\newtheorem{remark}[equation]{Remark}
\newcommand{\al}{\alpha}
\newcommand{\be}{\beta}
\newcommand{\ben}{\begin{enumerate}}
\newcommand{\bit}{\begin{itemize}}
\newcommand{\een}{\end{enumerate}}
\newcommand{\eit}{\end{itemize}}
\newcommand{\fg}{\mathfrak{g}}
\newcommand{\fh}{\mathfrak{h}}
\newcommand{\ga}{\gamma}
\newcommand{\Ga}{\Gamma}
\newcommand{\h}{\mathcal{H}}
\renewcommand{\H}{\mathbb{H}}
\newcommand{\I}{\mathcal{I}}
\newcommand{\im}{\operatorname{Im}}
\renewcommand{\Im}{\operatorname{Im}}
\newcommand{\J}{\mathcal{J}}
\newcommand{\la}{\lambda}
\newcommand{\La}{\Lambda}
\newcommand{\loc}{\operatorname{loc}}
\newcommand{\lra}{\longrightarrow}
\newcommand{\om}{\omega}
\newcommand{\Om}{\Omega}
\newcommand{\R}{\mathbb{R}}
\newcommand{\ra}{\rightarrow}
\definecolor{gray}{gray}{0.7}
\newcommand{\restr}{\mbox{\Large \(|\)\normalsize}}
\newcommand{\Span}{\operatorname{span}}
\newcommand{\spt}{\operatorname{spt}}
\renewcommand{\th}{\theta}
\newcommand{\we}{\wedge}
\newcommand{\wt}{\operatorname{wt}}
\newcommand{\Z}{\mathbb{Z}}
\def\XXint#1#2#3{{\setbox0=\hbox{$#1{#2#3}{\int}$ }
\vcenter{\hbox{$#2#3$ }}\kern-.6\wd0}}
\newcommand{\cald}{\mathcal{D}}
\newcommand{\calr}{\mathcal{R}}
\begin{document}

\title{Sobolev mappings and the Rumin complex}

\author{Bruce Kleiner}
\thanks{BK was supported by NSF grants DMS-1711556, DMS-2005553, and a Simons Collaboration grant.}
\author{Stefan M\"uller}
\thanks{SM has been supported by the Deutsche Forschungsgemeinschaft (DFG, German Research Foundation) through
the Hausdorff Center for Mathematics (GZ EXC 59 and 2047/1, Projekt-ID 390685813) and the 
collaborative research centre  {\em The mathematics of emerging effects} (CRC 1060, Projekt-ID 211504053).  This work was initiated during a sabbatical of SM at the Courant Institute and SM would like to thank  R.V. Kohn and the Courant Institute
members and staff for 
their  hospitality and a very inspiring atmosphere.}
\author{Xiangdong Xie}
\thanks{XX has been supported by Simons Foundation grant \#315130.}

\maketitle

\begin{abstract}
We consider contact manifolds equipped with Carnot-Caratheodory metrics, and   show that the Rumin complex   is respected by Sobolev mappings: Pansu pullback induces a chain mapping between the smooth Rumin complex and the distributional Rumin complex.  As a consequence, the Rumin flat complex -- the analog of the Whitney flat complex in the setting of contact manifolds -- is bilipschitz invariant.   

We also show that for Sobolev mappings between general Carnot groups, Pansu pullback induces a chain mapping when restricted to a certain differential ideal $\J*\subset\Om^*G$ of the de Rham complex. 

Both results are applications of the Pullback Theorem from our previous paper. 
\end{abstract}

\tableofcontents

\section{Introduction}

This is a continuation of a series of papers on geometric mapping theory in Carnot groups, which is concerned with (partial) rigidity and (partial) regularity of bilipschitz, quasiconformal, or more generally Sobolev mappings between open subsets of Carnot groups  \cite{KMX1,KMX2,kmx_approximation_low_p}.  In \cite{KMX1} we showed that for Sobolev maps between Carnot groups,  the  pullback of differential forms using the Pansu differential partially respects exterior differentiation.  This Pullback Theorem was the starting point for a series of applications, see \cite{KMX1,KMX2,kmx_approximation_low_p}; in this paper we give some further applications.  We refer the reader to the introduction of \cite{KMX1} for more discussion of background and history.

Let $G$ be a step $s$ Carnot group of dimension $N$, homogeneous dimension $\nu$, and Lie algebra $\fg$ with grading $\fg=\oplus_jV_j$.  We fix a graded basis $\{X_i\}_{i\in I}$ for $\fg$, and let $\{\th_i\}_{i\in I}$ be the dual basis.  We let $I_j:=\{i\in I\mid X_i\in V_j\}$ and $I_{\geq j}:=\cup_{k\geq j}I_k$.  For $J\subset I$, we let $\th_J=\La_{i\in J}\th_i$ (we assume a linear ordering on $I$ so that this is well-defined).

Fixing an open subset $U\subset G$, we denote the de Rham complex on $U$ by $(\Om^*U,d)$. Inspired by Rumin \cite{rumin_thesis},   we let  $\I^*U\subset \Om^*U$  be the differential ideal generated by the $1$-forms $\al\in\Om^1U$ which vanish on the horizontal subbundle $V_1\subset TU$, and $\J^*U$ be the annihilator of $\I^*U$: 
\begin{equation}
\label{eqn_def_j}
\J^*U=\{\al\in\Om^*U\mid \al\we\be =0\;\;\text{for all}\;\;\be\in \I^*U\}\,.
\end{equation}
Hence 
$$
\I^*U=\Span\{\al_i\we \th_i+\be_i\we d\th_i\mid i\in I_{\geq 2}\,,\;\al_i,\be_i\in \Om^*U\}\,,
$$
and
$$
\J^*U=\{\al\in\Om^*U\mid \al\we\th_i=\al\we d\th_i=0\;\;\text{for all}\;\; i\in I_{\geq 2}\}\,.
$$

We now let $G'$ be another Carnot group and denote the associated objects with primes.

We recall that if $U\subset G$ is open and $f:G\supset U\ra G'$ is a $W^{1,p}_{\loc}$-mapping for some $p>\nu$, then $f$ is Pansu differentiable almost everywhere \cite{vodopyanov_differentiability_2003,kmx_approximation_low_p} (see also \cite{pansu,margulis_mostow_differential_quasiconformal_mapping}), and we may define the Pansu pullback  of a differential form $\om\in \Om^*G'$ by 
$$
f_P^*\om(x)=(D_Pf(x)^*\om)(f(x))\,,
$$
for a.e. $x\in U$.

Our first result is that Pansu pullback is compatible with the exterior derivative when restricted to forms from the differential ideal $\J^*G'$ on the target $G'$.

\begin{theorem}~\label{thm_j_theorem}
Let  $U \subset G$ be open, and $f:U\ra G'$ be a $W^{1,p}_{\loc}$-mapping for some $p > \nu$. Suppose that for every graded homomorphism $\Phi:\fg\ra\fg'$, the pullback  $\Phi^*\th'_{I'_{\geq 2}}$ is a multiple of $\th_{I_{\geq 2}}$.   Then for every  $\alpha \in \J^kG'$,  we have
\begin{equation}
\label{eqn_d_commutes_pp}
d   (f_P^*\alpha) = f_P^*(d\alpha)
\end{equation}
as distributions, i.e.
$$
\int_Uf_P^*d\al\we\eta=\int_U(-1)^{k+1}f_P^*\al\we d\eta\,,
$$
for every $\eta\in\Om^{N-k-1}_c(U)$. 
\end{theorem}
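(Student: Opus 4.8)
The plan is to reduce Theorem~\ref{thm_j_theorem} to the Pullback Theorem of \cite{KMX1}, which (in the formulation we will use) asserts that for a $W^{1,p}_{\loc}$-map $f:U\to G'$ with $p>\nu$ and for a \emph{smooth} form $\om\in\Om^kG'$, the distributional exterior derivative of $f_P^*\om$ equals $f_P^*(d\om)$ \emph{provided} a certain algebraic compatibility condition holds at the level of graded homomorphisms $\Phi:\fg\to\fg'$. Our hypothesis is precisely such an algebraic condition, so the first step is to unwind the definition of $\J^*G'$ and check that, for $\al\in\J^kG'$, the form $\al$ — though not closed — has the property that $D_Pf(x)^*\al$ and $D_Pf(x)^*(d\al)$ are controlled by $f_P^*$ in the way the Pullback Theorem requires. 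Concretely, $D_Pf(x)$ is (the group homomorphism induced by) a graded Lie algebra homomorphism $\Phi=\Phi_x:\fg\to\fg'$ for a.e.\ $x$, so the hypothesis gives $\Phi_x^*\th'_{I'_{\geq2}}=c(x)\,\th_{I_{\geq2}}$ for some scalar function $c(x)$.

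The key algebraic step is to show that this forces $\Phi_x^*$ to map the ideal $\I^*G'$ into the ideal $\I^*G$, hence dually to map $\J^*G$-type conditions appropriately; more precisely, I would show that for $\al\in\J^kG'$ the pulled-back form $f_P^*\al$ is, pointwise a.e., either zero or a horizontal form in the relevant sense, and that the ``error terms'' that obstruct $d\circ f_P^*=f_P^*\circ d$ in the general Pullback Theorem all lie in $\I^*G$ and therefore vanish when tested against the wedge structure. The point is that $\J^*G'$ is characterized by $\al\we\th'_i=\al\we d\th'_i=0$ for $i\in I'_{\geq2}$, and pulling these identities back through $\Phi_x$ — using that $\Phi_x^*$ is a chain map on the full de Rham complex of the \emph{smooth} forms and that $\Phi_x^*\th'_{I'_{\geq2}}$ is a multiple of $\th_{I_{\geq2}}$ — transfers the defining relations of $\J^*G'$ to analogous relations on $G$. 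This is what allows the single algebraic hypothesis, stated only in terms of $\th'_{I'_{\geq2}}$, to propagate to all of $\J^*G'$.

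With the algebra in hand, the analytic step is routine: one approximates $\al$ by smooth forms (it is already smooth, being in $\Om^*G'$), uses the a.e.\ Pansu differentiability of $f$ together with the $W^{1,p}_{\loc}$, $p>\nu$, bound to justify that $f_P^*\al$ and $f_P^*d\al$ are locally integrable, and then invokes the Pullback Theorem to get the distributional identity \eqref{eqn_d_commutes_pp}. Finally one rewrites \eqref{eqn_d_commutes_pp} in the tested form by integrating against $\eta\in\Om^{N-k-1}_c(U)$ and using the integration-by-parts/Stokes convention, which produces the sign $(-1)^{k+1}$ as stated.

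The main obstacle I anticipate is purely algebraic: verifying that the hypothesis ``$\Phi^*\th'_{I'_{\geq2}}$ is a multiple of $\th_{I_{\geq2}}$'' is exactly the hypothesis needed to apply the Pullback Theorem of \cite{KMX1} to \emph{every} $\al\in\J^kG'$ — i.e.\ checking that the ideal-theoretic description of $\J^*G'$ interacts correctly with the graded homomorphism condition, rather than just with a single generator. Once that compatibility is confirmed, the rest is bookkeeping with Pansu pullback and the standard distributional/approximation machinery already developed in \cite{KMX1,kmx_approximation_low_p}.
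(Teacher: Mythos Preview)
Your high-level strategy --- reduce to the Pullback Theorem of \cite{KMX1} --- is exactly what the paper does, but the mechanism you describe is not the one that makes the reduction work, and the ``key algebraic step'' you propose is not the right one.

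The Pullback Theorem \cite[Theorem~1.5]{KMX1} does not take as hypothesis an ``algebraic compatibility condition at the level of graded homomorphisms'' in the ideal-theoretic sense you describe. Its hypothesis is a \emph{weight} inequality: the distributional identity
\[
\int_U f_P^*d\al\we\eta=(-1)^{k+1}\int_U f_P^*\al\we d\eta
\]
holds provided $\wt(d\al)+\wt(\eta)\le -\nu$ and $\wt(\al)+\wt(d\eta)\le -\nu$. The paper's proof is therefore a weight computation. The point of $\J^kG'$ is that any nonzero $\al\in\J^kG'$ factors as $\al=\al_0\we\th'_{I'_{\ge 2}}$ with $\al_0$ built from horizontal coframe elements, which pins down $\wt(\al)=-\nu'+N'-k$ (and similarly for $d\al\in\J^{k+1}G'$). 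The hypothesis on $\Phi^*\th'_{I'_{\ge 2}}$ is used only in the nontrivial case to force the degrees and weights of $\th_{I_{\ge 2}}$ and $\th'_{I'_{\ge 2}}$ to match, which yields $N-\nu=N'-\nu'$ and hence $\wt(\al)\le -\nu+N-k$, $\wt(d\al)\le -\nu+N-(k+1)$. For any test form $\eta\in\Om^{N-k-1}_c(U)$ one has $\wt(\eta)\le -(N-k-1)$ and $\wt(d\eta)\le -(N-k)$, so both required inequalities drop out immediately and the Pullback Theorem applies.

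By contrast, your proposed key step --- that $\Phi^*$ carries $\I^*G'$ into $\I^*G$ --- is true for \emph{every} graded homomorphism (since $\Phi^*\th'_i$ lands in the span of $\{\th_i:i\in I_{\ge 2}\}$ automatically) and therefore cannot be where the hypothesis of the theorem enters. Your claim that $f_P^*\al$ is ``a horizontal form in the relevant sense'' is also off: by \eqref{eqn_om_in_j_th_i_geq_2} the pullback carries the full nonhorizontal factor $\th_{I_{\ge 2}}$, which is precisely what makes its weight low enough. And the ``error terms lying in $\I^*G$'' picture is not how the obstruction in the Pullback Theorem is organized; the obstruction is governed by weights, not by membership in $\I^*$. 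So the gap is that you have not identified the actual hypothesis of the Pullback Theorem and have not carried out the weight bookkeeping that verifies it.
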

The hypothesis on graded homomorphisms $\Phi:\fg\ra\fg'$ holds automatically if $\dim G-\dim V_1\leq \dim G'-\dim V_1'$, 
see Remark~\ref{rem_j_condition}; in particular it holds if $G=G'$ or if $G$ and $G'$ are Heisenberg groups of possibly different dimensions.
When $G=G'$ is a step $2$ Carnot group and $\al\in \J^{N-1}G$ has codegree $1$, then Theorem~\ref{thm_j_theorem} follows from a result of Vodopyanov \cite{vodopyanov_foundations}.   Theorem~\ref{thm_j_theorem} has a natural extension to equiregular subriemannian manifolds, see Remark~\ref{rem_j_subriemannian}.

We now consider the $n$-th Heisenberg group $\H_n$ for some $n\geq 1$.  Using the notation from above with $G=\H_n$ we have the grading $\fh=V_1\oplus V_2$, a graded basis $\{X_i\}_{i\in I}$ and its dual $\{\th_i\}_{i\in I}$ where $I_1=\{1,\ldots,2n\}$, $I_2=\{2n+1\}$, and $d\th_{2n+1}=\sum_{j=1}^n\th_{2j-1}\we\th_{2j}$.   Also, for $U\subset \H_n$ open we let $\I^*U$, $\J^*U$ be the differential ideals defined above; note that these do not depend on the group structure on $G$ (i.e. left invariance), but just the subbundle $V_1\subset TU$, which defines a contact structure on $U$.

Adapting the de Rham complex to contact geometry, Rumin \cite{rumin_thesis} has defined a cochain complex 
$$
\calr^0U\stackrel{d_0}{\lra}\calr^1U\stackrel{d_1}{\lra}\ldots\stackrel{d_{2n}}{\lra}\calr^{2n+1}U\stackrel{d_{2n+1}}{\lra} 0
$$
where 
\begin{equation}
\calr^jU=
\begin{cases}
\Om^jU/\I^jU, \quad &0\leq j\leq n\\
\J^jU,\quad &n+1\leq j\leq 2n+1\,,
\end{cases}
\end{equation}
the differential $d_j$ is induced by the exterior derivative if $j\neq n$ and Rumin defines  
$$
d_n:\calr^nU=\Om^nU/\I^nU\ra \J^{n+1}U=\calr^{n+1}U
$$ 
by letting $d_n\al:=d\tilde\al$, where $\tilde\al\in \Om^nU$ is any lift of $\al\in \Om^nU/\I^nU$ such that exterior derivative $d\tilde\al$ belongs to  $\J^{n+1}U$
(it is easy to see that there is a unique $\tilde \alpha$ with this property and that $d_n$ is a second order operator, see \cite[Lemme, p.\  286]{rumin_thesis}).    There is a natural extension of the Rumin complex to a distributional version 
$$
\calr^0_{\cald'}U\stackrel{d_0}{\lra}\calr^1_{\cald'}U\stackrel{d_1}{\lra}\ldots\stackrel{d_{2n}}{\lra}\calr^{2n+1}_{\cald'}U\stackrel{d_{2n+1}}{\lra} 0\,,
$$
where $\calr^*_{\cald'}U$ corresponds to sections with distributional coefficients, and the differentials are defined by duality (see Section~\ref{sec_rumin_complex}). The following result may be viewed as the Heisenberg group analog of Reshetnyak's theorem that pullback by Sobolev mappings commutes with exterior differentiation \cite{reshetnyak_space_mappings_bounded_distortion}.

\begin{theorem}
\label{thm_rumin_chain_mapping}~
Let $U\subset \H_n$ be open, and $f:U\ra \H_n$ be a $W^{1,p}_{\loc}$-mapping for some $p>\nu=2n+2$.  Then Pansu pullback by $f$ induces a chain mapping
\begin{diagram}
\calr^0\H_n&\stackrel{d_0}{\lra}&\calr^1\H_n&\stackrel{d_1}{\lra}\ldots\stackrel{d_{2n}}{\lra}&\calr^{2n+1}\H_n&\stackrel{d_{2n+1}}{\lra} 0\\
\dTo^{f_P^*}&&\dTo^{f_P^*}&&\dTo^{f_P^*}&\\
\calr^0_{\cald'}U&\stackrel{d_0}{\lra}&\calr^1_{\cald'}U&\stackrel{d_1}{\lra}\ldots\stackrel{d_{2n}}{\lra}&\calr^{2n+1}_{\cald'}U&\stackrel{d_{2n+1}}{\lra} 0\,.
\end{diagram}
\end{theorem}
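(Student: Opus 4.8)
The plan is to verify the commutativity of the squares one regime at a time, following the internal structure of the Rumin complex: the \emph{quotient range} $0\le j\le n-1$, where $\calr^j=\Om^j/\I^j$ and $d_j$ is induced by the exterior derivative; the \emph{subcomplex range} $n+1\le j\le 2n$, where $\calr^j=\J^j$ and $d_j$ is again induced by $d$; and the second order \emph{middle differential} $d_n\colon\Om^n/\I^n\to\J^{n+1}$. First I would check that $f_P^*$ is well defined on each $\calr^j$. For a.e.\ $x\in U$ the Pansu differential $D_Pf(x)$ is a graded endomorphism of $\fh$; every graded endomorphism $\Phi$ of $\fh$ satisfies $\Phi^*\th_{2n+1}\in\R\,\th_{2n+1}$ --- which is exactly the hypothesis of Theorem~\ref{thm_j_theorem} for $G=G'=\H_n$ (cf.\ Remark~\ref{rem_j_condition}) --- and such a $\Phi^*$ preserves both the ideal $\I^*$ and its annihilator $\J^*$. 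Since $D_Pf\in L^p_{\loc}$ with $p>\nu>2n+1$, the Pansu pullback of any smooth form lies in $L^1_{\loc}$ and hence defines a distributional form. Combining these, $f_P^*$ descends to maps $f_P^*\colon\calr^j\H_n\to\calr^j_{\cald'}U$ for every $j$.

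For the subcomplex range I would first note that $(\J^*,d)$ is a subcomplex of the de Rham complex: from $\al\we\th_i=\al\we d\th_i=0$ ($i\in I_{\geq2}$) one gets $d\al\we\th_i=d(\al\we\th_i)-(-1)^{|\al|}\al\we d\th_i=0$ and $d\al\we d\th_i=d(\al\we d\th_i)=0$, so $d\J^k\subset\J^{k+1}$. Theorem~\ref{thm_j_theorem} then applies verbatim: for $\al\in\J^j\H_n$ it gives $d(f_P^*\al)=f_P^*(d\al)$ as distributions, with both sides in $\J^{j+1}_{\cald'}U$, which is precisely the desired commuting square for $n+1\le j\le 2n$.

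For the quotient range I must show $d(f_P^*\al)\equiv f_P^*(d\al)\pmod{\I^{j+1}_{\cald'}U}$ for $\al\in\Om^j\H_n$, $j\le n-1$. The tool is the Pullback Theorem of \cite{KMX1} underlying Theorem~\ref{thm_j_theorem}; concretely I would use that it yields, for every smooth $\al\in\Om^k\H_n$,
$$
(\ast)\qquad d(f_P^*\al)-f_P^*(d\al)\in\I^{k+1}_{\cald'}U.
$$
The mechanism behind $(\ast)$ is this: since $f_P^*$ respects $\I^*$ one may take $\al=\sum_Ja_J\th_J$ primitive horizontal ($J\subset I_1$); as $d\th_i=0$ for $i\in I_1$, modulo $\I^{j+1}$ the form $d\al$ reduces to its horizontal exterior derivative; the chain rule for Pansu differentials of Sobolev maps gives $d(a_J\circ f)\equiv\sum_{i\in I_1}\bigl((X_ia_J)\circ f\bigr)\,f_P^*\th_i\pmod{\I^1_{\cald'}U}$, which is $f_P^*$ applied to that horizontal derivative; and the remaining terms involve $d(f_P^*\th_i)$ for $i\in I_1$, which lies in $\I^2_{\cald'}U$ because the distributional exterior derivative of the horizontal $1$-form $f_P^*\th_i$ picks up only the structure bracket $[V_1,V_1]\subset V_2$, hence lands in $\R\,d\th_{2n+1}+\th_{2n+1}\we\Om^1\subset\I^2$. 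Carrying out these distributional computations rigorously is exactly what the Pullback Theorem provides. In particular $(\ast)$ holds for $k=n$ as well.

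The hard part is the middle differential $d_n$. Recall $d_n[\al]=d\tilde\al$, where $\tilde\al\in\Om^n\H_n$ is the \emph{unique} lift of $[\al]$ with $d\tilde\al\in\J^{n+1}\H_n$; here Theorem~\ref{thm_j_theorem} says nothing directly, since $\J^n\H_n=0$. Taking $\tilde\al$ as representative, so $f_P^*[\al]=[f_P^*\tilde\al]$, the plan is to show that $f_P^*\tilde\al$ is \emph{itself} the distributional Rumin lift of $[f_P^*\tilde\al]$; by uniqueness of that lift (part of the construction of $\calr^*_{\cald'}$ in Section~\ref{sec_rumin_complex}) this amounts to showing $d(f_P^*\tilde\al)\in\J^{n+1}_{\cald'}U$ and $d(f_P^*\tilde\al)=f_P^*(d\tilde\al)$, whence $d_n(f_P^*[\al])=d(f_P^*\tilde\al)=f_P^*(d\tilde\al)=f_P^*(d_n[\al])$. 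By $(\ast)$ the difference $\mu:=d(f_P^*\tilde\al)-f_P^*(d\tilde\al)$ lies in $\I^{n+1}_{\cald'}U$; it is closed, since $d\bigl(f_P^*(d\tilde\al)\bigr)=f_P^*(dd\tilde\al)=0$ by Theorem~\ref{thm_j_theorem} applied to $d\tilde\al\in\J^{n+1}\H_n$; and $f_P^*(d\tilde\al)\in\J^{n+1}_{\cald'}U$ because $f_P^*$ preserves $\J^*$. The remaining point --- the real crux --- is to prove $\mu=0$; equivalently, a sharpening of the Pullback Theorem: if $\be\in\Om^n\H_n$ satisfies $d\be\in\J^{n+1}\H_n$, then $d(f_P^*\be)=f_P^*(d\be)$. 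One can repackage this by pairing $d_n(f_P^*[\al])-f_P^*(d_n[\al])\in\J^{n+1}_{\cald'}U$ against compactly supported test classes in $\calr^n_cU$ and invoking the formal self-adjointness of $d_n$ in the Rumin complex, but that reduces to the same estimate. I expect this sharp degree-$n$ pullback estimate, together with a careful construction of $\calr^*_{\cald'}U$ making the unique-lift argument available, to be the technical heart of the proof.
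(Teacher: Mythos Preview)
Your treatment of the ranges $j\ge n+1$ and $j\le n-1$ is fine (and, once unpacked, coincides with the paper's), but there is a real gap at the middle differential, and it is precisely the point you flag and then misdiagnose.

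You try to show $\mu:=d(f_P^*\tilde\al)-f_P^*(d\tilde\al)=0$ as a distributional $(n+1)$-form, i.e.\ when paired against an \emph{arbitrary} $\eta\in\Om^n_cU$. That is stronger than what is needed, and it is exactly the case where the weight hypothesis of the Pullback Theorem fails: for generic $\eta\in\Om^n_c$ one only has $\wt(d\eta)\le-(n+1)$, so $\wt(\tilde\al)+\wt(d\eta)\le -2n-1>-\nu$. This is why you cannot close the argument. Your appeal to a ``unique distributional Rumin lift'' is also misplaced: in the paper $\calr^*_{\cald'}U$ is \emph{not} built from lifts but by duality, with $d_k\be(\eta):=(-1)^{k+1}\be(d_{2n-k}\eta)$; there is no uniqueness statement to invoke at the distributional level.

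The key step you dismiss (``that reduces to the same estimate'') does \emph{not} reduce to the same estimate. Since $d_n$ on $\calr^n_{\cald'}U$ is defined by duality against $d_n\eta$ for $\eta\in\calr^n_cU$, the test form is itself replaced by its \emph{Rumin lift} $\tilde\eta\in\Om^n_cU$ with $d\tilde\eta\in\J^{n+1}_cU$. The identity to prove is then
\[
\int_U f_P^*\tilde\al\we d\tilde\eta=(-1)^{n+1}\int_U f_P^*(d\tilde\al)\we\tilde\eta,
\]
and now \emph{both} $d\tilde\al$ and $d\tilde\eta$ lie in $\J^{n+1}$, hence have weight $\le -(n+2)$ by \eqref{eqn_coweight_codegree}. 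This gives $\wt(\tilde\al)+\wt(d\tilde\eta)\le -n-(n+2)=-\nu$ and symmetrically $\wt(d\tilde\al)+\wt(\tilde\eta)\le-\nu$, so the Pullback Theorem applies directly. In short: the weight gain comes from using Rumin's lift on the test side as well as on $\al$, and that is exactly what the weak/dual formulation (Lemma~\ref{lem_weak_characterization_rumin_differentials}) supplies. Once you adopt this viewpoint, all three regimes are handled uniformly by a single application of the Pullback Theorem with suitably chosen lifts, as in the paper's proof.
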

Theorem~\ref{thm_rumin_chain_mapping} -- like Theorem~\ref{thm_j_theorem} --   follows from the Pullback Theorem \cite{KMX1}.

\bigskip
For $U\subset \H_n$ open, we define the {\bf Rumin flat complex} 
$$
\calr^*_{\flat}U\subset \calr^*_{\cald'}U
$$ 
to be the subcomplex consisting of the elements $\al\in  \calr^*_{\cald'}U$ such that both $\al$ and its differential may be represented by $L^\infty$ sections.  This is the Heisenberg analog of the complex $\Om^*_\flat U$ of Whitney flat forms, which is defined for an open subset $U\subset\R^n$, or more generally for a manifold $X$ with a (uniform) bilipschitz structure.  As in the $\R^n$ case, the Rumin flat complex is compatible with bilipschitz homeomorphisms:

\begin{corollary}
\label{cor_bilipschitz_induces_isomorphism}
If $f:U\ra U'$ is a bilipschitz homeomorphism between open subsets of $\H_n$, then $f$ induces an isomorphism of normed cochain complexes
$$
f_P^*:\calr^*_{\flat}U'\lra \calr^*_{\flat}U\,;
$$
the norm of $f_P^*$ and its inverse are bounded depending on the bilipschitz constant of $f$. 
\end{corollary}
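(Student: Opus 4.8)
The plan is to deduce Corollary~\ref{cor_bilipschitz_induces_isomorphism} from Theorem~\ref{thm_rumin_chain_mapping} applied to both $f$ and $f^{-1}$, together with the functoriality of Pansu pullback and the bilipschitz-invariance of the $L^\infty$ condition. First I would observe that a bilipschitz homeomorphism $f:U\to U'$ between open subsets of $\H_n$ is in particular a $W^{1,\infty}_{\loc}$-mapping, hence a $W^{1,p}_{\loc}$-mapping for every $p$, so Theorem~\ref{thm_rumin_chain_mapping} applies to both $f$ and $f^{-1}$; moreover the Pansu differential $D_Pf(x)$ is a graded automorphism of $\fh$ for a.e.\ $x$, with operator norm and the norm of its inverse controlled by the bilipschitz constant $L$ of $f$ (this is standard, e.g.\ from the metric characterization of $D_Pf$ as a blow-up limit, cf.\ the differentiability results cited in the excerpt).

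Next I would check that $f_P^*$ maps $\calr^*_\flat U'$ into $\calr^*_\flat U$. Given $\al\in\calr^k_\flat U'$, by definition $\al$ and $d\al$ are represented by $L^\infty$ sections. Since $D_Pf(x)$ has norm bounded by a power of $L$ a.e., the pointwise formula $f_P^*\al(x)=(D_Pf(x)^*\al)(f(x))$ together with the change-of-variables inequalities for bilipschitz maps (the Jacobian of $f$ is comparable to $1$) shows that $f_P^*\al$ is again an $L^\infty$ section, with $\|f_P^*\al\|_\infty\le C(L)\|\al\|_\infty$; here one must treat the two ranges $0\le k\le n$ and $n+1\le k\le 2n+1$ slightly differently, using that on the quotient spaces $\calr^kU=\Om^kU/\I^kU$ the relevant norm is the quotient norm, which Pansu pullback respects because $D_Pf(x)^*$ preserves the ideal $\I^*$ (it is a graded automorphism). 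By Theorem~\ref{thm_rumin_chain_mapping}, $d(f_P^*\al)=f_P^*(d\al)$ as distributions, and the right-hand side is $L^\infty$ by the same estimate applied to $d\al$; hence $f_P^*\al\in\calr^k_\flat U$ and $f_P^*$ restricts to a bounded cochain map $\calr^*_\flat U'\to\calr^*_\flat U$ with norm $\le C(L)$.

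Then I would establish that $f_P^*$ is an isomorphism with bounded inverse by showing $(f^{-1})_P^*\circ f_P^* = \id$ and $f_P^*\circ (f^{-1})_P^* = \id$ on the flat complexes. The key input is functoriality of Pansu pullback under composition for Sobolev maps, namely $(g\circ f)_P^* = f_P^*\circ g_P^*$ a.e., which follows from the chain rule $D_P(g\circ f)(x) = D_Pg(f(x))\circ D_Pf(x)$ valid a.e.\ for the composition of a bilipschitz map with a bilipschitz map (both are Pansu differentiable a.e., and the a.e.\ differentiability of $g$ pulls back under the bilipschitz $f$ since $f$ preserves null sets). Applying this with $g=f^{-1}$ gives $(f^{-1})_P^*\circ f_P^* = (f^{-1}\circ f)_P^* = \id_P^* = \id$ on forms, and symmetrically on the other side; since the flat complexes are genuine subspaces of $\calr^*_{\cald'}$ the same identities hold there. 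Combining with the norm bound $C(L)$ for $f_P^*$ and the analogous bound $C(L)$ for $(f^{-1})_P^*$ (same bilipschitz constant) yields that $f_P^*:\calr^*_\flat U'\to\calr^*_\flat U$ is an isomorphism of normed cochain complexes with $\|f_P^*\|,\|(f_P^*)^{-1}\|\le C(L)$.

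The main obstacle I anticipate is the functoriality/chain-rule step: one needs to justify carefully that the Pansu differential behaves well under composition in the a.e.\ sense for bilipschitz maps, in particular that $f^{-1}$ is Pansu differentiable at $f(x)$ for a.e.\ $x$ and that the composed differential equals the product of the differentials almost everywhere, handling the interchange of ``almost every'' sets under the measure-preserving (up to bounded factor) map $f$. A secondary technical point is verifying that the quotient-norm structure on $\calr^kU$ for $k\le n$ transforms correctly under $f_P^*$, i.e.\ that the induced map on $\Om^k/\I^k$ is well-defined and bi-bounded; this reduces to the pointwise statement that a graded automorphism $D_Pf(x)$ of $\fh$ pulls $\I^*$ back to $\I^*$, which is immediate from the definition of $\I^*$ in terms of the horizontal subbundle $V_1$, since graded automorphisms preserve $V_1$.
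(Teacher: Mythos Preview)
Your overall strategy is sound, but there is a genuine gap at the step where you invoke Theorem~\ref{thm_rumin_chain_mapping} to conclude $d(f_P^*\al)=f_P^*(d\al)$ for $\al\in\calr^k_\flat U'$. As stated, Theorem~\ref{thm_rumin_chain_mapping} takes as input \emph{smooth} elements of $\calr^*\H_n$ (its proof, via the Pullback Theorem of \cite{KMX1}, uses that one can choose smooth lifts $\tilde\al$, $\tilde\eta$ and integrate by parts). An element $\al\in\calr^k_\flat U'$ is only $L^\infty$ with $L^\infty$ Rumin differential, and is defined on $U'$ rather than all of $\H_n$; the theorem does not apply to it directly, and you have not explained how to extend the chain-map identity from smooth to flat forms.

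The paper's proof supplies exactly this missing step: it mollifies $\om\in\calr^k_\flat U'$ (using left translation) to produce smooth approximants $\om_j\in\calr^kU'_j$ on an exhaustion $U'_j\nearrow U'$ with uniformly bounded $\|\om_j\|_{L^\infty}$ and $\|d_k\om_j\|_{L^\infty}$ and a.e.\ convergence $\om_j\to\om$, $d_k\om_j\to d_k\om$. Theorem~\ref{thm_rumin_chain_mapping} is then applied to the smooth $\om_j$, and Dominated Convergence (using that $f_P^*$ is bounded on $L^\infty$ since $f$ is bilipschitz) gives $d_k(f_P^*\om)=f_P^*(d_k\om)$ in the limit. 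Once this approximation argument is in place, the remainder of your outline --- the pointwise $L^\infty$ bound on $f_P^*$ via the control on $\|D_Pf(x)\|$, and the appeal to $f^{-1}$ for the inverse --- matches the paper's reasoning (the paper compresses your chain-rule discussion into the single remark that $f_P^*$ is already an isomorphism $\calr^k_{L^\infty}U'\to\calr^k_{L^\infty}U$ and that the same argument applied to $(f^{-1})_P^*$ finishes the proof).
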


By using local charts, this yields a well-defined Rumin flat complex on any bilipschitz contact manifold (or bilipschitz Heisenberg manifold).  More precisely, suppose $L\in [1,\infty)$ and $X$ is a topological manifold equipped with an atlas $\{\phi_i:X\supset\hat U_i\ra U_i\}$ where $U_i\subset\H_n$ and all transition homeomorphisms are all $L$-bilipschitz with respect to the Carnot distance (restricted from $\H_n$).  This will hold, for instance, if $X$ is a metric space locally $L$-bilipschitz to $\H_n$; in particular, any contact manifold of dimension $2n+1$ equipped with a subriemannian metric has this property (in view of Darboux's theorem that contact structures are all locally equivalent up to diffeomorphism).  Then one may define $\calr^*_{\flat}X$ using Corollary~\ref{cor_bilipschitz_induces_isomorphism} and the atlas of charts.  This definition is compatible with locally $L$-bilipschitz embeddings.  Thus one may think of the Rumin flat complex as a replacement for the de Rham complex in the bilipschitz world.   See Section~\ref{sec_rumin_complex} for details.

\section{Pansu pullback and the differential ideal \protect{$\J$}}

In this section we will prove Theorem~\ref{thm_j_theorem}.  We reader may wish to  consult \cite[Sections 3-4]{KMX1} for the results used here.

We retain the notation from the introduction.  Before proceeding with the proof, we recall that there is a notion of {\bf weight} for differential forms.  If we define 
$$
\wt:I\ra \{-1,\ldots,-s\}
$$ by $\wt(i):=-j$ $\iff$ $i\in I_j$, then $\om\in\Om^*U$ has weight $\leq w$ if and only if  it
 is a linear combination with $C^\infty$ coefficients of left invariant forms $\th_{\{i_1\ldots i_k\}}$ where $\sum_j\wt(i_j)\leq w$. 
 We define the weight of $\omega \not \equiv  0$ to be the smallest integer $w$ s.t. $\omega$ has weight  $\le  w$.

Note that if $\om=\sum_{J\subset I}a_J\th_J\in \J^*{U}$, then the condition $\om\we\th_i=0$ for all $i\in I_{\geq 2}$ 
forces $a_J=0$ unless $I_{\geq 2}\subset J$, and hence 
\begin{equation}
\label{eqn_om_in_j_th_i_geq_2}
\om=\om_0\we\th_{I_{\geq 2}}
\end{equation} 
for some $\om_0\in\Om^*U$.  In particular, if $\al\in\J^kU \setminus \{0\}$, then:
\begin{equation}
\label{eqn_coweight_codegree}
\wt(\al)   =-\nu+ N-k\,.
\end{equation}

\begin{remark}
\label{rem_i_j_tensor_product_description}
Letting $\La^*\fg\subset \Om^*G$ denote the subcomplex of left invariant forms on $G$, (which we identify with the exterior algebra on $\fg^*$), we obtain differential ideals of $\La^*\fg$ by intersecting with $\I^*$, $\J^*$:
$$
\I^*\fg:=\I^*G\cap\La^*\fg\, ,\quad\J^*\fg:=\J^*G\cap \La^*\fg\,
$$  
where $\J^*\fg$ is also the annihilator of $\I^*\fg$ in $\La^*\fg$.  Then we have the alternate descriptions
$$
\I^*U\simeq C^\infty (U)\otimes \I^*\fg\,,\quad \J^*U\simeq C^\infty (U)\otimes \J^*\fg\,.
$$
\end{remark}

\bigskip

\begin{proof}[Proof of Theorem~\ref{thm_j_theorem}]

The theorem is immediate if the pullbacks $f_P^*\al$, $f_P^*d\al$ vanish almost everywhere, so we assume this is not the case.  Since $\al=\al_0\we\th'_{I'_{\geq 2}}$, $d\al=\be_0\we\th'_{I'_{\geq 2}}$ for some $\al_0\,,\be_0\in \Om^*G'$ by \eqref{eqn_om_in_j_th_i_geq_2}, we get that $f_P^*\th'_{I'_{\geq 2}}(x)\neq 0$ for some point of Pansu differentiability $x\in U$.  From the assumptions of the theorem we therefore have $(D_Pf(x))^*\th'_{I'_{\geq 2}}(f(x))=\la \th_{I_{\geq 2}}$ for some $\la\neq 0$.  Since $D_Pf(x):\fg\ra \fg'$ is a graded homomorphism
it follows that 
$$
-\nu+\dim V_1=\wt \th_{I_{\geq 2}}=\wt\th'_{I'_{\geq 2}}=-\nu'+\dim V_1'\,.
$$
Combining this with \eqref{eqn_coweight_codegree} we obtain
$$
\wt(\al)\leq -\nu+(N-k)\,,\quad \wt(d\al)\leq -\nu+N-(k+1)\,.
$$
For all $\eta\in \Om^{\ell}_c(U)$ with $\ell=N-(k+1)$ we have 
$$
\wt(\eta)\leq -\ell= -(N-(k+1))\,,\quad \wt(d\eta)\leq -\ell-1=-(N-k)\,;
$$ 
and hence
\begin{align*}
\wt(d\al)+\wt(\eta)\leq -\nu\\
\wt(\al)+\wt(d\eta)\leq -\nu\,.
\end{align*}
By the Pullback Theorem \cite[Theorem 1.5]{KMX1} we have
$$
\int_Uf_P^*d\al\we\eta=\int_U(-1)^{k+1}f_P^*\al\we d\eta\,,
$$
which gives \eqref{eqn_d_commutes_pp}. 
\end{proof}

\bigskip\bigskip
\begin{remark}
\label{rem_j_condition}
Note that Theorem~\ref{thm_j_theorem} holds nontrivially only when there is a graded homomorphism $\Phi:\fg\ra\fg'$ which induces an isomorphism $\sum_{j\geq 2}V_j\ra \sum_{j\geq 2}V'_j$.  In this case we have decomposition $\fg=\ker\Phi\oplus\fg_0$ into graded ideals where $\ker\Phi\subset V_1$ is an abelian ideal, and $\Phi$ induces an embedding $\fg_0\hookrightarrow \fg'$ onto an ideal containing $\sum_{j\geq 2}V'_j$.   This happens, for instance, when $G=\H_m$, $G'=\H_{m'}$ are Heisenberg groups with $m\leq m'$.
\end{remark}

\begin{remark}
\label{rem_j_subriemannian}
 Theorem~\ref{thm_j_theorem} extends in a straightforward way to $W^{1,p}$-mappings between equiregular subriemannian manifolds using the generalization of the pullback theorem in \cite[Appendix A]{KMX1}.  We indicate briefly how the setup and proof generalize.  Let $(M,\{V_j\}_{1\leq j\leq s},g)$, $(M',\{V_j'\}_{1\leq j\leq s'},g')$ be equiregular subriemannian manifolds, i.e. $M$ is a smooth manifold, $V_1\subsetneq\ldots \subsetneq V_s=TM$ is collection of subbundles where $V_j:=[V_1,V_{j-1}]$ for $1<j\leq s$, and $g$ is a Riemannian metric on $M$, and similarly for $(M',\{V_j'\}_{1\leq j\leq s'},g')$.  We let $\nu=\sum_jj\dim(V_j/V_{j-1})$ denote the homogeneous dimension of $M$.  Then the definition of the differential ideals in \eqref{eqn_def_j} extends  verbatim to yield differential ideals $\I^* M'\,,\J^* M'\subset\Om^*M$.   The hypothesis of Theorem~\ref{thm_j_theorem} on graded homomorphisms generalizes to an assertion about graded homomorphisms between nilpotent tangent cones.  Under this assumption, if $f:M\ra M'$ is a $W^{1,p}$-mapping for some $p>\nu$ then  the Pansu pullback is well-defined and commutes with the exterior derivative for forms $\om\in \J^*M'$.  To verify this, since the assertion of the theorem may be localized without loss of generality one may work with adapted (co)frames as in \cite[Appendix A]{KMX1}; the rest of the proof follows the same lines as in the Carnot group case, beginning with \eqref{eqn_om_in_j_th_i_geq_2}. (Note that for for general differential forms, the definition Pansu pullback used in \cite[Appendix A]{KMX1} depends on the choice of metrics $g$, $g'$; nonetheless, for a form $\om\in \J^*M'$, the pullback is independent of the choice of $g$, $g'$ due to \eqref{eqn_om_in_j_th_i_geq_2}.) 
\end{remark}

\bigskip\bigskip
\section{Pansu pullback and the Rumin complex}
\label{sec_rumin_complex}
In this section we prove that the Rumin complex is respected by Sobolev mappings.  We first characterize Rumin's differential weakly, and then use this to define distributional and (locally) flat versions of the Rumin complex.  We then prove Theorem~\ref{thm_rumin_chain_mapping}, and apply it to prove the bilipschitz invariance of the Rumin flat complex.
Fix $n\geq 1$, and let $U\subset\H_n$ be an open subset.

\bigskip
\subsection*{The weak formulation of the Rumin differential}~
Before proceeding, we make some remarks about the Rumin complex.

Note that the wedge product descends to a bilinear mapping 
\begin{equation}
\label{eqn_pairing_descends}
\Om^kU/\I^kU\times \J^\ell U\stackrel{\we}{\lra}\Om^{k+\ell}U\,,
\end{equation} 
since $\J^\ell U$ is contained in the annihilator of $\I^kU$.   We let $\calr^*_cU\subset\calr^*U$ denote the subcomplex of elements with compact support, i.e. $\calr^k_cU:=\Om^k_cU/\I^k_cU$ for $0\leq k\leq n$ and $\calr^k_cU:=\J^k_cU$ for $k\geq n+1$.  Letting $\I^*\fh_n\subset\La^*\fh_n$ denote the left invariant subspace of $\Om^*\H_n$, we have isomorphisms 
\begin{align*}
\calr^k_cU:=&\Om^k_cU/\I^k_cU\simeq (C^\infty_c(U)\otimes \La^k\fh_n)/(C^\infty_c(U)\otimes\I^k\fh_n)\\
\simeq& C^\infty_c(U)\otimes(\La^k\fh_n/\I^k\fh_n)\simeq C^\infty_c(U;\La^k\fh_n/\I^k\fh_n)\\
\simeq& \Ga_{C^\infty_c}(\La^kTU/\I^kTU)
\end{align*}
where $\La^kTU/\I^kTU$ is the quotient of the two bundles $\La^kTU$ and $\I^kTU$ and $\Ga_{C^\infty_c}$ refers to smooth sections with compact support.  By choosing a complement $W\subset \La^k\fh_n$ to $\I^k\fh_n$, for any $0\leq k\leq n$ we may lift any $\al\in \calr^k_cU\simeq C^\infty_c(U;\La^k\fh_n/\I^k\fh_n)$ to $\hat\al\in C^\infty_c(U,W)\subset C^\infty_c(U,\La^k\fh_n)$ such that $\spt\hat\al=\spt\al$.

We now show that the differentials of the Rumin complex may be characterized weakly as dual to  the exterior derivative, appropriately interpreted (using the bilinear pairing defined by wedge product and integration).

\begin{lemma}
\label{lem_weak_characterization_rumin_differentials}~
For $0\leq k<2n+1$, if $\be\in \calr^kU$ and $\ga\in \calr^{k+1}U$, then $\ga=d_k\be$ if and only if 
\begin{equation}  \label{eq:weak_rumin_all}
\int_U   \beta \wedge d_{2n-k} \eta =(-1)^{k+1}  \int_U   \gamma \wedge \eta  
\end{equation}
for every $\eta\in \calr^{2n-k}_cU$; here the wedge products refer to the pairing \eqref{eqn_pairing_descends}.  More explicitly, the following hold.
\ben
\item Suppose $\be\in \Om^kU/\I^kU$ and $\ga\in \Om^{k+1}U/\I^{k+1}U$ for some $0\leq k<n$.  Then $\ga=d_k\be$ if and only if 
\begin{equation}  \label{eq:weak_rumin_low}
\int_U   \beta \wedge d \eta =(-1)^{k+1}  \int_U   \gamma \wedge \eta  
\end{equation}
for every $ \eta \in J^{2n-k}_cU$.
\item Suppose $\be\in \Om^nU/\I^nU$ and $\ga\in \J^{n+1}U$.  Then $\ga=d_n\be$ if and only if for every $ \eta\in \Om^n_cU/\I^n_cU$
we have 
\begin{equation}  
\label{eq:weakD}
\int_U \beta \wedge d_n \eta \;
=(-1)^{n+1}  \int_U \gamma \wedge \eta \,;
\end{equation}
\item Suppose $\be\in \J^k$ and $\ga\in \J^{k+1}$ for some $n< k<2n+1$.      Then $\ga=d_k\be$ if and only if 
\begin{equation} \label{eq:weak_rumin_high}
\int_U\be\we d\eta=(-1)^{k+1}\int_U\ga\we \eta
\end{equation}
for every $\eta\in \Om^{2n-k}_cU$. 
\een
\end{lemma}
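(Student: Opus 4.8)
The plan is to prove Lemma~\ref{lem_weak_characterization_rumin_differentials} by reducing the general weak characterization \eqref{eq:weak_rumin_all} to the three explicit cases (1)--(3), and then proving each of those. The unifying principle is that the Rumin differential $d_k$ is adjoint to $d_{2n-k}$ with respect to the bilinear pairing $\calr^k U \times \calr^{2n-k}_c U \to \R$, $(\al,\eta) \mapsto \int_U \al \we \eta$, given by \eqref{eqn_pairing_descends} followed by integration. So the first step is to check that this pairing is \emph{nondegenerate} on the relevant spaces: for a fixed $\be$, if $\int_U \ga \we \eta$ agrees for all test $\eta \in \calr^{2n-k}_c U$, then $\ga$ is uniquely determined. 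This is a pointwise linear-algebra statement about the wedge pairing $\La^{k+1}\fh_n/\I^{k+1}\fh_n \times \La^{2n-k}\fh_n/\I^{2n-k}\fh_n \to \La^{2n+1}\fh_n \cong \R$ (for $k+1 \le n$, using $\I$ in the second slot; for $k+1 \ge n+1$, using $\J^{k+1}$ paired against $\Om^{2n-k}/\I^{2n-k}$; and the self-dual middle case $k = n$), which is exactly the Poincar\'e-type duality underlying Rumin's construction. Once nondegeneracy is established, it suffices to verify that the \emph{actual} Rumin differential $d_k\be$ satisfies the integral identity, since the identity then pins down $\ga$.

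Next I would dispatch cases (1) and (3), which are the ``easy'' ends where $d_k$ is literally induced by the de Rham $d$. For case (3) ($n < k < 2n+1$), both $\be$ and $\ga = d\be$ live in $\J^*U$, and $\eta \in \Om^{2n-k}_c U$ is an honest compactly supported form; the identity $\int_U d\be \we \eta = (-1)^{k+1}\int_U \be \we d\eta$ is just Stokes' theorem (integration by parts) with no boundary term, together with the observation that although $\be \we d\eta$ and $d\be \we \eta$ a priori live in $\Om^{2n+1}U$, only the $\I$-complementary part matters and replacing $\eta$ by $\eta + (\text{something in }\I^{2n-k})$ does not change $\int_U \be\we d\eta$ because $\be \in \J$ annihilates $\I$ and $\I$ is a differential ideal (so $d$ of an $\I$-form is again in $\I$). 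Case (1) ($0 \le k < n$) is dual: now $\be,\ga$ are classes mod $\I$, the test form $\eta \in \J^{2n-k}_c U$, and one lifts $\be,\ga$ to honest forms $\hat\be,\hat\ga$; then $\int_U \hat\be \we d\eta = (-1)^{k+1}\int_U d\hat\be \we \eta$ by Stokes, and one checks this is independent of the choice of lift (changing $\hat\be$ by an $\I^k$-form changes $d\hat\be$ by an $\I^{k+1}$-form, both of which wedge to zero against $\eta \in \J$ and $d\eta \in \J$ — here one uses that $\J$ is closed under $d$, or directly that $\I$ is a differential ideal).

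The genuinely delicate case is (2), the middle differential $d_n \colon \Om^n U/\I^n U \to \J^{n+1}U$, which is Rumin's second-order operator, \emph{not} induced by $d$. Here the plan is: given $\be \in \Om^n U / \I^n U$, pick the distinguished lift $\tilde\be \in \Om^n U$ with $d\tilde\be \in \J^{n+1}U$ (existence and uniqueness from \cite[Lemme, p.~286]{rumin_thesis}), so that $d_n\be = d\tilde\be$ by definition. For a test class $\eta \in \Om^n_c U/\I^n_c U$, similarly take the distinguished compactly supported lift $\tilde\eta$ with $d\tilde\eta \in \J^{n+1}_c U$, so $d_n\eta = d\tilde\eta$. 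Then I compute both sides of \eqref{eq:weakD}: by Stokes, $\int_U \tilde\be \we d\tilde\eta = (-1)^{n+1}\int_U d\tilde\be \we \tilde\eta$. The left side equals $\int_U \be \we d_n\eta$ interpreted via the pairing \eqref{eqn_pairing_descends} because $d_n\eta = d\tilde\eta \in \J^{n+1}$ annihilates $\I^n$, so the value is independent of the lift $\tilde\be$; symmetrically the right side equals $(-1)^{n+1}\int_U d_n\be \we \eta$ because $d_n\be = d\tilde\be \in \J^{n+1}$ annihilates $\I^n$, so it is independent of the lift $\tilde\eta$. This gives the identity with $\ga = d_n\be$. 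Combined with the nondegeneracy of the (self-dual, by Rumin's duality) pairing $\J^{n+1}U \times \Om^n_c U/\I^n_c U \to \R$, this proves (2). The main obstacle I anticipate is handling the lift-independence cleanly in case (2): one must be careful that the pairing \eqref{eqn_pairing_descends} is being used on \emph{both} arguments (the $\J$-form kills changes in the $\I$-class on the other side), and that Rumin's distinguished lift is compatible with taking compact supports — but the construction in \cite{rumin_thesis} is local and linear-algebraic in nature, so the compactly supported lift exists with the same support, as already noted in the discussion preceding the lemma. Finally, to conclude \eqref{eq:weak_rumin_all} in general, one observes that \eqref{eq:weak_rumin_all} \emph{is} literally one of \eqref{eq:weak_rumin_low}, \eqref{eq:weakD}, \eqref{eq:weak_rumin_high} according to whether $k < n$, $k = n$, or $k > n$ (unravelling what $d_{2n-k}$ and the pairing mean in each range), so the three cases exhaust it.
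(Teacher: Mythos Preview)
Your proposal is correct and follows essentially the same approach as the paper: both argue by integration by parts (Stokes) on suitable lifts, together with the nondegeneracy of the pointwise wedge pairing between $\La^k\fh_n/\I^k\fh_n$ and $\J^{2n+1-k}\fh_n$ (which the paper records separately as Lemma~\ref{lem_quotient_j_duality}(2)). In particular your treatment of case~(2) via Rumin's distinguished lifts $\tilde\be,\tilde\eta$ with $d\tilde\be,d\tilde\eta\in\J^{n+1}$ matches the paper's proof exactly.
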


\bigskip\bigskip
Below we will identify $\La^*V_1$ with a subspace of $\La^*\fh_n$ using the embedding induced by projection $\fh_n=V_1\oplus V_2\lra V_1$.   By convention we let $\La^jV_1=\{0\}$ for all $j<0$.

For the proof we will need the following algebraic fact.

\bigskip
\begin{lemma}
\label{lem_quotient_j_duality}~
\ben
\item The map $W_k:\La^kV_1\ra\La^{k+2}V_1$  given by $W_k(\al):=d\th_{2n+1}\we\al$ is injective for $k\leq n-1$ and surjective for $k\geq n-1$. 
\item 
If      $\I^*\fh_n,\J^*\fh_n\subset \La^*\fg$ denote the left-invariant subspaces of $\I^*\H_n$, $\J^*\H_n$, respectively, then wedge product induces a nondegenerate pairing
$$
\La^k\fh_n/\I^k\fh_n\times \J^{2n+1-k}\fh_n\stackrel{\we}{\lra}\La^{2n+1}\fh_n\,.
$$
for $0\leq k\leq n$. 
\een  
\end{lemma}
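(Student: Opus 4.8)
The plan is to treat the two parts separately, with part (1) feeding into part (2). For part (1), I would work entirely inside $\La^*V_1$, which I identify with the exterior algebra on the standard symplectic vector space $(V_1,\omega)$ where $\omega=d\th_{2n+1}=\sum_{j=1}^n\th_{2j-1}\we\th_{2j}$ has rank $2n$. The map $W_k=\omega\we\cdot$ is exactly Lefschetz multiplication by the symplectic form, so part (1) is the Hard Lefschetz statement for $\La^*V_1$: iterating $\omega\we\cdot$ gives an isomorphism $\La^{n-j}V_1\to\La^{n+j}V_1$ for each $j\geq 0$, from which injectivity of $W_k$ for $k\le n-1$ and surjectivity for $k\ge n-1$ follow by a dimension count (note $\dim\La^kV_1=\dim\La^{2n-k}V_1$ makes injectivity and surjectivity equivalent in the relevant range). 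I would either cite the standard $\mathfrak{sl}_2$-representation-theoretic proof (the triple $L=\omega\we\cdot$, $\Lambda=$ adjoint contraction, $H=$ weight operator acts on $\La^*V_1$) or, since only the single step $W_k$ is needed, give the short direct argument: pick a symplectic basis, decompose $\La^*V_1$ into primitive classes, and observe $\omega\we\cdot$ raises the weight by $2$ and is injective below the middle.

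For part (2), I would first make the left-invariant spaces explicit using \eqref{eqn_om_in_j_th_i_geq_2} (here $I_{\ge2}=\{2n+1\}$): every element of $\J^*\fh_n$ has the form $\al_0\we\th_{2n+1}$ with $\al_0\in\La^*V_1$, and the constraint $\al\we d\th_{2n+1}=0$, i.e. $\al_0\we\omega=0$, means $\al_0$ lies in $\ker(\omega\we\cdot)\subset\La^*V_1$. So $\J^{2n+1-k}\fh_n\cong \ker\big(W_{2n-k}:\La^{2n-k}V_1\to\La^{2n-k+2}V_1\big)\we\th_{2n+1}$. Dually, $\I^k\fh_n$ is spanned by forms involving $\th_{2n+1}$ or $d\th_{2n+1}$, and one checks that the quotient $\La^k\fh_n/\I^k\fh_n$ is naturally identified with $\La^kV_1/(\omega\we\La^{k-2}V_1)$ — the degree-$k$ part of the coinvariants of Lefschetz multiplication. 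For $k\le n$, part (1) tells us $\omega\we\cdot:\La^{k-2}V_1\to\La^kV_1$ is injective, so this quotient has dimension $\dim\La^kV_1-\dim\La^{k-2}V_1$; and $\ker(W_{2n-k})$ has dimension $\dim\La^{2n-k}V_1-\dim\La^{2n-k+2}V_1=\dim\La^kV_1-\dim\La^{k-2}V_1$ by Poincaré duality $\dim\La^jV_1=\dim\La^{2n-j}V_1$. So the two sides of the pairing have equal dimension, and it suffices to prove the pairing has trivial left (equivalently right) kernel.

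To see nondegeneracy of $\La^k\fh_n/\I^k\fh_n\times\J^{2n+1-k}\fh_n\to\La^{2n+1}\fh_n$: given $[\beta]\in\La^k\fh_n/\I^k\fh_n$ with $\beta=\beta_0+(\text{$\I$-terms})$, $\beta_0\in\La^kV_1$, and $\gamma=\gamma_0\we\th_{2n+1}$ with $\omega\we\gamma_0=0$, the product is $(\beta_0\we\gamma_0)\we\th_{2n+1}$, so the pairing is (up to the fixed volume form $\th_{[2n]}\we\th_{2n+1}$) the pairing $\La^kV_1/(\omega\we\La^{k-2}V_1)\times\ker(W_{2n-k})\to\La^{2n}V_1\cong\R$ given by $(\beta_0,\gamma_0)\mapsto\beta_0\we\gamma_0$. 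Well-definedness on the quotient is exactly $\omega\we\La^{k-2}V_1$ pairing to zero against $\ker(W_{2n-k})$, which holds since $(\omega\we\xi)\we\gamma_0=\xi\we(\omega\we\gamma_0)=0$. For nondegeneracy: if $\beta_0\we\gamma_0=0$ for all $\gamma_0\in\ker(W_{2n-k})$, I want to conclude $\beta_0\in\omega\we\La^{k-2}V_1$. Here I would invoke the Lefschetz decomposition of $\La^kV_1$ into $\bigoplus_{j}\omega^j\we P^{k-2j}$ where $P^m\subset\La^mV_1$ is the primitive part; the perfect pairing $\La^kV_1\times\La^{2n-k}V_1\to\R$ restricts on primitive classes to a (sign-definite) perfect pairing $P^{k}\times P^{2n-k}\to\R$ compatible with Lefschetz multiplication, and $\ker(W_{2n-k})$ contains $P^{2n-k}\oplus\omega\we P^{2n-k-2}\oplus\cdots$ (all primitive components of degree $\ge n$, suitably iterated) — in particular it contains a copy of $P^{2n-k}$, whose pairing with the primitive part $P^k$ of $\beta_0$ is perfect; so $\beta_0$'s primitive component vanishes and $\beta_0\in\omega\we\La^{k-2}V_1$ as desired, hence $[\beta]=0$.

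The main obstacle is bookkeeping in the last step: identifying $\ker(W_{2n-k})$ precisely within the Lefschetz decomposition and checking that the restriction of the wedge pairing to the relevant primitive summands is nondegenerate (with the correct, nonvanishing, sign on each primitive piece — this is the content of the Hodge–Riemann-type positivity, though here over $\R$ only the nonvanishing is needed). A cleaner alternative that avoids primitive decompositions: show directly that $\ker(W_{2n-k})=\{\gamma_0 : \omega\we\gamma_0=0\}$ is the exact annihilator of $\omega\we\La^{k-2}V_1$ under the perfect pairing $\La^{2n-k}V_1\times\La^kV_1\to\R$ — indeed $\omega\we\gamma_0=0 \iff \gamma_0$ annihilates $\La^{k-2}V_1$ under $(\gamma_0,\xi)\mapsto\gamma_0\we\omega\we\xi$, i.e. $\gamma_0\perp\omega\we\La^{k-2}V_1$ — so the induced pairing on $\La^{2n-k}V_1/(\omega\we\La^{2n-k-2}V_1)\times \La^kV_1/(\omega\we\La^{k-2}V_1)$ has the kernel on one side exactly matched by the constraint on the other; combined with the equal-dimension count from part (1), this forces nondegeneracy without ever mentioning primitive forms. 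I would likely present this second route.
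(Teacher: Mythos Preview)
Your proposal is correct and follows essentially the same path as the paper. For (1), the paper simply cites the classical Lefschetz property for symplectic vector spaces, as you do. For (2), the paper makes the same identifications $\La^k\fh_n/\I^k\fh_n\cong\La^kV_1/\im W_{k-2}$ and $\J^{2n+1-k}\fh_n\cong\ker W_{2n-k}$, then proves nondegeneracy of the reduced pairing $\La^kV_1/\im W_{k-2}\times\ker W_{2n-k}\to\La^{2n}V_1$ via Poincar\'e duality on $\La^*V_1$ plus the dimension count from (1); this is exactly your ``second route'' (your observation that $\ker W_{2n-k}$ is the exact annihilator of $\im W_{k-2}$ is the same content as the paper's surjection $\La^kV_1\twoheadrightarrow L(\ker W_{2n-k},\La^{2n}V_1)$).

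Two small comments. First, your primitive-decomposition route works but is heavier than needed; you are right to prefer the annihilator argument. Second, in your final sentence you write the induced pairing as living on $\La^{2n-k}V_1/(\omega\we\La^{2n-k-2}V_1)\times\La^kV_1/(\omega\we\La^{k-2}V_1)$, but the first factor should be $\ker W_{2n-k}$ (a subspace, not a quotient): once you know $\ker W_{2n-k}=(\im W_{k-2})^\perp$ under the perfect pairing $\La^{2n-k}V_1\times\La^kV_1\to\La^{2n}V_1$, general linear algebra gives a perfect pairing $\ker W_{2n-k}\times\La^kV_1/\im W_{k-2}\to\La^{2n}V_1$ directly, and the dimension count is then a consequence rather than an extra input.
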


\bigskip
\begin{proof}~
(1).  Since the $2$-form $d\th_{2n+1}$ restricts to a symplectic form on $V_1$, this is a classical fact \cite{rumin_thesis,weil_introduction_etude_varietes_kahleriennes}.

(2).   The pairing is well defined by the definitions of $\I^*\h_n$ and $\J^*\h_n$.

Note that 
\begin{align*}
\La^k\fh_n&=\{\th_{2n+1}\we\al+\be\mid\al\in \La^{k-1}V_1\,,\;\be\in\La^kV_1\}\,,\\
\I^k\fh_n&=\{\th_{2n+1}\we\al+d\th_{2n+1}\we\be\mid\al\in\La^{k-1}V_1\,,\;\be\in\La^{k-2}V_1\},\\
\J^\ell\fh_n&=\{\th_{2n+1}\we\al\mid\al\in\La^{\ell-1}V_1\,,\;d\th_{2n+1}\we\al=0
\}\,.
\end{align*}
Hence the inclusion induces an isomorphism
$$
\La^kV_1/\im W_{k-2}\stackrel{\sim}{\lra}\La^k\fh_n/\I^k\fh_n\,.
$$ 
Therefore we are reduced to showing that  the wedge product
induces a nondegenerate pairing
\begin{equation}
\label{eqn_reduced_pairing}
\La^kV_1/\im W_{k-2}\;\times\; \ker W_{2n-k}
\stackrel{\we}{\lra}\La^{2n}V_1\,.
\end{equation}
Wedge product induces an isomorphism 
$$
\La^kV_1\stackrel{\sim}{\lra}L(\La^{2n-k}V_1,\La^{2n}V_1)\,,
$$
so composing with the restriction map 
$$
L(\La^{2n-k}V_1,\La^{2n}V_1)\lra L(\ker W_{2n-k},\La^{2n}V_1)
$$
we obtain a surjection
\begin{equation}
\label{eqn_we_surjection}
\La^kV_1/\Im W_{k-2}\lra L(\ker W_{2n-k},\La^{2n}V_1)\,.
\end{equation}
Using (1) and the fact that $\dim\La^jV_1=\dim\La^{2n-j}V_1$ for all $j$, we have
\begin{align*}
\dim\La^kV_1/\Im W_{k-2}&=\dim\La^kV_1-\dim \La^{k-2}V_1\\
&=\dim\La^{2n-k}V_1-\dim\La^{2n-(k-2)}V_1\\
&=\dim\ker W_{2n-k}\\
&=\dim L(\ker W_{2n-k},\La^{2n}V_1)\,.
\end{align*}
Hence \eqref{eqn_we_surjection} is an isomorphism and it follows that \eqref{eqn_reduced_pairing} is nondegenerate.
\end{proof}

\bigskip\bigskip
\begin{proof}[Proof of Lemma~\ref{lem_weak_characterization_rumin_differentials}]~
All three cases follow from integration by parts and the duality in Lemma~\ref{lem_quotient_j_duality}(2).

(1). Integrating by parts, we have
$$
\int_U\be\we d\eta+(-1)^k\ga\we\eta=(-1)^k\int_U(\ga-d_k\be)\we\eta\,;
$$
by Lemma~\ref{lem_quotient_j_duality}(2) this vanishes for every $\eta\in \J^{2n-k}_cU$ if and only if $\ga=d_k\be$.

(2). Choose lifts $\tilde\eta\in\Om^n_cU$,  $\tilde\be\in \Om^nU$ of $\eta$ and $\be$ respectively, such that $d\tilde\eta\,,d\tilde\be\in \J^{n+1}U$; hence $d\tilde\eta=d_n\eta$ and  $d\tilde\be=d_n\be$.  Integrating by parts we have
\begin{align*}
\int_U&\be\we d_n\eta+(-1)^n\ga\we\eta\\
&=\int_U\tilde\be\we d\tilde\eta+(-1)^n\ga\we\tilde\eta\\
&=(-1)^n\int_U(\ga-d\tilde\be)\we\tilde\eta\\
&=(-1)^n\int_U(\ga-d_n\be)\we \eta\,;
\end{align*}
this vanishes for every $\eta\in \Om^n_cU/\I^n_cU$ if and only if $\ga=d_n\be$, by Lemma~\ref{lem_quotient_j_duality}(2).

(3). Similar to (1).

\end{proof}

\bigskip\bigskip
\subsection*{The distributional Rumin complex}
Let $U\subset\H_n$ be an open subset.  Note that the map
$$
\calr^kU\ra L(\calr^{2n+1-k}_cU,\R)
$$
which sends $\al\in\calr^kU$ to the linear functional 
$$
\eta\mapsto \int_U\al\we\eta
$$
is an embedding by Lemma~\ref{lem_quotient_j_duality}(2).
Motivated by this and by Lemma~\ref{lem_weak_characterization_rumin_differentials}, we define the distributional version $\calr^*_{\cald'}U$ of the Rumin complex as follows.  We let $\calr^k_{\cald'}U$ be the collection of linear functionals $\al:\calr^{2n+1-k}_cU\ra \R$ satisfying the (standard distribution-type continuity) condition that for every compact subset $K\subset U$, there exist $j\in\Z$, $C\in\R$ such that for every $\eta\in \calr^{2n+1-k}_cU$ supported in $K$ we have
$$
|\al(\eta)|\leq C\,\|\eta\|_{C^j}\,,
$$ 
where the $C^j$ norm is defined using the isomorphism 
\begin{equation*}
\calr^k_cU\simeq 
\begin{cases}
C^\infty_c(U;\La^k\fh_n/\I^k\fh_n)\quad 0\leq k\leq n\\
C^\infty_c(U;\J^k\fh_n)\quad k\geq n+1\,,
\end{cases}
\end{equation*}
and differentiation using left invariant vector fields.   
(An alternative definition is $\cald'U\otimes \calr^*\fh_n$, where $\calr^*\fh_n\subset\calr^*\H_n$ is the subcomplex of left-invariant elements; this is equivalent, since  (after fixing a nonzero element $\xi\in \La^{2n+1}\fh_n$) we have an isomorphism  $\cald'U\otimes \calr^*\fh_n\stackrel{\sim}{\ra}\calr^*_{\cald'}U$ which sends $u\otimes\bar\om\in \cald'U\otimes\calr^k\fh_n$ to the linear functional 
$$
\phi\bar\eta\mapsto u(\phi)(\bar\om\we\bar\eta)(\xi)
$$
for $\phi\in C^\infty_cU$, $\bar\eta\in \La^{2n+1-k}\fh_n$.)
Following \eqref{lem_weak_characterization_rumin_differentials} the differential $d_k:\calr^k_{\cald'}U\ra \calr^{k+1}_{\cald'}U$ is defined by
$$
d_k\be(\eta)=(-1)^{k+1}\be(d_{2n-k}\eta)\,.
$$

Now consider a $W^{1,p}_{\loc}$-mapping $f:\H_n\supset U\ra \H_n$ for some $p>2n+1$.  If $x\in U$ is a point of Pansu differentiability, then the graded homomorphism $D_Pf(x):\fh_n\ra\fh_n$ induces linear mappings $\calr^k\fh_n\ra \calr^k\fh_n$.  (To see this, note that $\Phi^*(\I^*\fh_n)\subset \I^*\fh_n$, so $\Phi^*:\calr^k\fh_n=\La^k\fh_n/\I^k\fh_n\ra \La^k\fh_n/\I^k\fh_n$ is well-defined for $0\leq k\leq n$.  If $n+1\leq k\leq 2n+1$, then $\Phi^*(\J^k)=\J^k$ if $\Phi$ is  a graded isomorphism because $\J^*:=(\I^*)^\perp$, while $\Phi^*(\J^k)=\{0\}$ if $\Phi$ is not an isomorphism.)  By the Sobolev condition, we get that $f_P^*\om\in L^1_{\loc}$ for every $\om\in \calr^*\H_n$, and hence we have a mapping $f_P^*:\calr^*\H_n\ra \calr^*_{\cald'}U$.

\bigskip\bigskip
\begin{proof}[Proof of Theorem~\ref{thm_rumin_chain_mapping}]
The theorem reduces to the assertion that for every $0\leq k<2n+1$, $\al\in \calr^k\H_n$, $\eta\in \calr^{2n-k}_cU$
\begin{equation}
\label{eqn_f_p_al_eta}
\int_Uf_P^*\al\we d_{2n-k}\eta=(-1)^{k+1}\int_Uf_P^*d_k\al\we\eta\,.
\end{equation}
For any value of $k$ we may choose lifts of $\al$ or $\eta$ (as appropriate) such that \eqref{eqn_f_p_al_eta} follows directly from the Pullback Theorem \cite{KMX1}. 
When $k\geq n$, we may choose the lift $\tilde\eta\in\Om^{2n-k}U$ 
of $\eta\in \calr^{2n-k}_c$ to have compact support; this follows from Rumin's construction 
when $k=n$, and when $k>n$ it may be arranged by choosing a complement $W\subset\La^{2n-k}\fh_{2n-k}$ 
to the subspace $\I^{2n-k}\fh_{2n-k}\subset\La^{2n-k}\fh_{2n-k}$ and requiring that $\tilde\eta(x)\in W$ for all $x\in U$.
\end{proof}

\bigskip\bigskip
For $1<p\leq \infty$ we  define $\calr^*_{L^p}U$  to be the collection of all  $\al\in \calr^*_{\cald'}U$ which define a bounded linear functional on $\calr^*U$ equipped with the $L^{p^*}$-norm, where $p^*$ is the exponent conjugate to $p$;    this is equivalent to taking $\calr^*_{L^p}U:=L^pU\otimes \calr^*\fh_n$.  The {\bf Rumin flat complex $\calr^*_\flat U$} is then defined to be the set of $\al\in\calr^*_{L^\infty}U$ such that $d\al\in \calr^*_{L^\infty}U$; this is a normed cochain complex with the norm $\|\al\|_\flat:=\max(\|\al\|_{L^\infty}\,,\|d\al\|_{L^\infty})$.  Similarly, we may define local versions $\calr^*_{L^p_{\loc}}U\subset\calr^*_{\cald'}U$ and $\calr^*_{\flat_{\loc}}U\subset\calr^*_{\cald'}U$.

\bigskip\bigskip
\begin{proof}[Proof of Corollary \ref{cor_bilipschitz_induces_isomorphism}]
This follows from Theorem~\ref{thm_rumin_chain_mapping} by an approximation argument.

Let $U':=f(U)$ and pick $\om\in \calr^k_\flat U'$.  By mollifying (using left translation),
we may find a sequence $U_j'\subset U'$ of open subsets and $\om_j\in \calr^kU'_j$ such that:
\bit
\item The $U'_j$ exhaust $U$.
\item $\sup_j\max(\|\om_j\|_{L^\infty}\,,\|d_k\om_j\|_{L^\infty})<\infty$.    
\item $\om_j(x)\ra \om(x)$, $d_k\om_j(x)\ra d_k\om(x)$ for a.e. $x\in U'$.
\eit
Since $f$ is bilipschitz, it induces an isomorphism of Banach spaces $f_P^*:\calr^k_{L^\infty}U'\ra\calr^k_{L^\infty}U$, and moreover $f_P^*\om_j(x)\ra f_P^*\om(x)$ and $(f_P^*d_k\om_j)(x)\ra (f_P^*d_k\om)(x)$ for a.e. $x\in U$.
If $\eta\in \calr^{2n+1-k}_cU$, then $(d_kf_P^*\om_j)(\eta)=(f_P^*d_k\om_j)(\eta)$ by Theorem~\ref{thm_rumin_chain_mapping}.  Hence by the Dominated Convergence Theorem
\begin{align*}
(d_kf_P^*\om)(\eta)&=\lim_{j\ra\infty}(d_kf_P^*\om_j)(\eta)\\
&=\lim_{j\ra\infty}(f_P^*d_k\om_j)(\eta)\\
&=(f_P^*d_k\om)(\eta)\,.
\end{align*}
This shows that $f_P^*$ induces a chain mapping $\calr^*_\flat U'\ra \calr^*_\flat U$.   Applying the same reasoning to $(f^{-1})^*_P$ yields the corollary.

\end{proof}

\bigskip\bigskip
\subsection*{The Rumin complex on bilipschitz Heisenberg manifolds}
We now use Corollary~\ref{cor_bilipschitz_induces_isomorphism} to construct a bilipschitz natural chain complex on manifolds.
\begin{definition}
A metric space $X$ is a {\bf bilipschitz Heisenberg manifold} if for some $n$ it is locally bilipschitz homeomorphic to $\H_n$; if the bilipschitz constants may be taken to be uniform, then $X$ is a {\bf uniformly bilipschitz Heisenberg manifold}.
\end{definition}
Now suppose $X$ is locally $L$-bilipschitz homeomorphic to $\H_n$. Then we may find a collection 
$$
\{\phi_i:X\supset U_i\ra\hat U_i\subset \H_n\}_{i\in I}
$$ 
of $L$-bilipschitz homeomorphisms between open sets, such that $\{U_i\}_{i\in I}$ is an open cover of $X$.  Imitating the chartwise definition of tensor fields on smooth manifolds, we may define the complex $\calr^*_{\flat}X$ by letting elements be collections $\{\al_i\in \calr^*_\flat\hat U_i\}_{i\in I}$ such that:
\bit
\item $\sup_i\|\al_i\|_{L^\infty}<\infty$.
\item The $\al_i$s are compatible with the transition homeomorphims, i.e. for every $i,j\in I$,  the transition homeomorphism   
$$
\phi_j\circ\phi_i^{-1}:\phi_i(U_i\cap U_j)\ra \phi_j(U_i\cap U_j)
$$
which has bilipschitz constant $L^2$, pulls back  $\al_j\restr_{\phi_j(U_i\cap U_j)}$ to $\al_i\restr_{\phi_i(U_i\cap U_j)}$.
\eit

If we only know that $X$ is locally bilipschitz equivalent to $\H_n$ without uniform control on the bilipschitz constant, then the construction above may be modified in a straightforward way to obtain a locally flat version $\calr^*_{\flat_{\loc}}X$ of the complex $\calr^*_{\flat}X$.  Corollary~\ref{cor_bilipschitz_induces_isomorphism} now gives:

\begin{corollary}
\label{cor_bilipschitz_induces_isomorphism_heisenberg_manifolds}
If $f:X\ra X'$ is a bilipschitz homeomorphism between bilipschitz Heisenberg manifolds, then Pansu pullback induces an isomorphism of chain complexes $\calr^*_{\flat_{\loc}}X'\lra \calr^*_{\flat_{\loc}}X$.
\end{corollary}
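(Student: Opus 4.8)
The plan is to reduce Corollary~\ref{cor_bilipschitz_induces_isomorphism_heisenberg_manifolds} to the already-established Corollary~\ref{cor_bilipschitz_induces_isomorphism} on open subsets of $\H_n$ by a chartwise argument, mirroring the way the complex $\calr^*_{\flat_{\loc}}X$ was constructed above. First I would fix atlases $\{\phi_i:X\supset U_i\ra\hat U_i\subset\H_n\}_{i\in I}$ on $X$ and $\{\psi_j:X'\supset U_j'\ra\hat U_j'\subset\H_n\}_{j\in J}$ on $X'$, and observe that since $f:X\ra X'$ is a bilipschitz homeomorphism, the composite $\psi_j\circ f\circ\phi_i^{-1}$ is a bilipschitz homeomorphism between open subsets of $\H_n$ (defined on $\phi_i(U_i\cap f^{-1}(U_j'))$), so it has a well-defined Pansu pullback on the locally flat Rumin complex by Corollary~\ref{cor_bilipschitz_induces_isomorphism} (applied in its local version; note the corollary's approximation argument localizes since the estimates there are on compact subsets). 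The map $f_P^*$ on $\calr^*_{\flat_{\loc}}X'$ is then defined by declaring, for $\om=\{\om_j\in\calr^*_{\flat_{\loc}}\hat U_j'\}_{j\in J}\in\calr^*_{\flat_{\loc}}X'$, that $f_P^*\om$ is the collection whose $i$-th component on $\phi_i(U_i\cap f^{-1}(U_j'))$ equals $(\psi_j\circ f\circ\phi_i^{-1})_P^*(\om_j)$.

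The main content is then a bookkeeping verification with three parts. (i) \emph{Well-definedness and the compatibility condition.} One must check that the locally defined pieces $(\psi_j\circ f\circ\phi_i^{-1})_P^*(\om_j)$ for varying $j$ glue together to give a single element of $\calr^*_{\flat_{\loc}}\hat U_i$, and that the resulting collection over $i\in I$ satisfies the transition-compatibility condition of the definition of $\calr^*_{\flat_{\loc}}X$. Both reduce to the chain-rule/functoriality identity $(\beta\circ\alpha)_P^*=\alpha_P^*\circ\beta_P^*$ for composable bilipschitz maps between open subsets of $\H_n$, combined with the fact that the transition homeomorphisms $\phi_{i'}\circ\phi_i^{-1}$ and $\psi_{j'}\circ\psi_j^{-1}$ already pull back the given chart representatives to each other by hypothesis. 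The composition law for Pansu pullbacks of bilipschitz maps follows from the almost-everywhere chain rule for Pansu differentials (the Pansu differential of a bilipschitz map exists a.e. and $D_P(\beta\circ\alpha)(x)=D_P\beta(\alpha(x))\circ D_P\alpha(x)$ for a.e.\ $x$), together with $\calr^*$-functoriality of pullback by graded isomorphisms of $\fh_n$, which is immediate from the algebraic definitions of $\I^*$ and $\J^*$. (ii) \emph{Chain map property.} Since $d_k$ on $\calr^*_{\flat_{\loc}}X$ is defined chartwise, and each chartwise piece $(\psi_j\circ f\circ\phi_i^{-1})_P^*$ commutes with $d_k$ by Corollary~\ref{cor_bilipschitz_induces_isomorphism} (its statement asserts $f_P^*$ is an isomorphism of normed cochain complexes, hence a chain map), the induced map $f_P^*$ commutes with $d_k$ on $X$. (iii) \emph{Isomorphism.} The inverse is $(f^{-1})_P^*$, constructed the same way; that $f_P^*\circ(f^{-1})_P^*=\id$ and conversely again follows from the composition law applied to $f\circ f^{-1}=\id$ and $f^{-1}\circ f=\id$ in charts, using that $\id_P^*=\id$.

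The step I expect to be the genuine obstacle — as opposed to notational overhead — is establishing the composition law $(\beta\circ\alpha)_P^* = \alpha_P^*\circ\beta_P^*$ \emph{as an identity of locally flat Rumin currents}, i.e.\ verifying that the a.e.-defined chain rule for Pansu differentials indeed descends to the correct equality of distributional functionals on $\calr^*_c$, with no loss from null sets or from the quotient by $\I^*$. The subtlety is that $\calr^*_{\flat_{\loc}}$ elements in low degree $k\le n$ are sections of the quotient bundle $\La^kT/\I^kT$, so one must check that the chain rule is compatible with passing to this quotient; this is where one invokes that each $D_P\alpha(x)$, being a graded \emph{isomorphism} of $\fh_n$ here (bilipschitz maps have invertible Pansu differentials a.e.), preserves $\I^*\fh_n$ and hence acts on $\La^k\fh_n/\I^k\fh_n$. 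Once this identity is in hand, parts (i)--(iii) are routine. I would state the composition law as a short lemma (or cite it from \cite{KMX1,KMX2} if available there), then dispatch the corollary in a few lines.
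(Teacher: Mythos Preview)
Your proposal is correct and follows exactly the approach the paper intends: the paper presents this corollary as an immediate consequence of Corollary~\ref{cor_bilipschitz_induces_isomorphism} together with the chartwise definition of $\calr^*_{\flat_{\loc}}X$, and you have simply unpacked that implication in detail. Your identification of the Pansu chain rule $(\beta\circ\alpha)_P^*=\alpha_P^*\circ\beta_P^*$ as the only non-bookkeeping ingredient is apt---indeed this identity is already implicitly needed for the chartwise definition of $\calr^*_{\flat_{\loc}}X$ itself to be consistent (transitivity of the transition compatibility), so the paper is assuming it throughout.
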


\bibliography{product_quotient}
\bibliographystyle{amsalpha}

\end{document}